\theoremstyle{theorem}
\newtheorem{theorem}{Theorem}[section]
\newtheorem{cor}[theorem]{Corollary}
\newtheorem{prop}[theorem]{Proposition}
\theoremstyle{remark}
\newtheorem{remark}{Remark}[section]
\newtheorem{definition}[remark]{Definition}
\newtheorem{example}[remark]{Example}
\newtheorem{algorithm}[remark]{Algorithm}
\newcommand{\polya}{P\"olya}
\newcommand{\rank}{\text{Rank}}
\newcommand{\RR}{\mathbb{R}}
\newcommand{\minimize}{\text{Minimize}}
\newcommand{\norm}[1]{||#1||}
\newcommand{\rh}{{\hat{r}}}
\newcommand{\ch}{{\hat{c}}}
\newcommand{\rhh}{{\overline{r}}}
\newcommand{\chh}{{\overline{c}}}
\begin{document}
\bibliographystyle{plain}

\title{Rank of Submatrices of the Pascal Matrix}
\author{Scott Kersey}
\address{Georgia Southern University, USA}
\email{scott.kersey@gmail.com}
\keywords{Rank, Pascal matrix, Birkhoff interpolation}
\subjclass{15A15, 41A10}

\maketitle

\begin{abstract}
In a previous paper, we derived necessary and sufficient conditions for
the invertibility of square submatrices of the Pascal upper triangular matrix.
To do so, we established a connection with the two-point Birkhoff 
interpolation problem.
In this paper, we extend this result by
deriving a formula for the rank of submatrices of the Pascal matrix.
Our formula works for both square and non-square submatrices.
We also provide bases for the row and column spaces of these submatrices.
Further, we apply our result to one-point lacunary polynomial approximation.
\end{abstract}

Paper appears in: Journal of Mathematical Sciences: Advances and Applications, 42, 1--12 (2016).

%%%%%%%%%%%%%%%%%%%%%%%%%%%%%%%%%%%%%%%%%%%%%%%%%%%%%%%%%%%%%
\section{Introduction}

Pascal's triangle can be represented by the infinite upper triangular matrix
$$
T 
%:= \Big [ \binom{j}{i} \Big ] 
=\begin{bmatrix}
\binom{0}{0}&\binom{1}{0}&\binom{2}{0}&\cdots  \\[3pt]
\binom{0}{1}&\binom{1}{1}&\binom{2}{1}&\cdots  \\[3pt]
\binom{0}{2}&\binom{1}{2}&\binom{2}{2}&\cdots  \\[3pt]
\vdots & \vdots & \vdots & \ddots  \\
\end{bmatrix}
=\begin{bmatrix}
1 & 1 & 1 & \cdots  \\[3pt]
0 & 1 & 2 & \cdots  \\[3pt]
0 & 0 & 1 & \cdots  \\[3pt]
\vdots & \vdots & \vdots & \ddots  \\
\end{bmatrix}
$$
with $\binom{j}{i}:=0$ if $i>j$.
Submatrices are of the form
$$
T_{r,c} = 
%\Big [ \binom{c_j}{r_i} \Big ] = 
\begin{bmatrix}
\binom{c_0}{r_0} & \binom{c_1}{r_0} & \cdots & \binom{c_n}{r_0} \\
%\binom{c_0}{r_1} & \binom{c_1}{r_1} & \cdots & \binom{c_m}{r_1} \\
\vdots & \vdots & \ddots & \vdots \\
\binom{c_0}{r_m} & \binom{c_1}{r_m} & \cdots & \binom{c_n}{r_m}
\end{bmatrix}
$$
for some \emph {selections} $r = [r_0, \ldots, r_m]$ and 
$c = [c_0, \ldots, c_n]$ of the rows and columns of $T$, respectively.
For example, 
$$
T_{[1,3,4],[0,4,5,7]} = 
\begin{bmatrix}
\binom{0}{1} & \binom{4}{1} & \binom{5}{1} & \binom{7}{1} \\[3pt]
\binom{0}{3} & \binom{4}{3} & \binom{5}{3} & \binom{7}{3} \\[3pt]
\binom{0}{4} & \binom{4}{4} & \binom{5}{4} & \binom{7}{4}
\end{bmatrix}
=
\begin{bmatrix} 0&4&5&7\\[3pt]0&4&10&35\\[3pt]0&1&5&35 \end{bmatrix}.
$$
While the rank of this matrix is $3$, it is not obvious to see.
The main goal in this paper is to provide a formula for determining 
the rank of such matrices,
and secondly to provide bases for the row and column spaces.
Later in the paper, we apply our results to a problem in 
polynomial approximation.

The results in this paper are a generalization of the main result in \cite{K16},
stated below as in Theorem \ref{t1}.
This theorem provides necessary and sufficient conditions for the invertibility 
of square submatrices of the Pascal matrix.
In the proof of that result, we showed that
the invertibility of square submatrices of $T$ is equivalent
to the unique solvability of a two-point Birkhoff interpolation problem.
This Birkhoff interpolation problem has been studied in \cite{B1906,F69,P31,W35},
and generalized to \emph{lacunary interpolation} (see \cite{P09,P11}).

In the present paper, we show how to determine the rank of submatrices of the upper 
triangular Pascal matrix.
Our result applies to both square and non-square submatrices.
Our main results are stated in Theorem \ref{t2} and Theorem \ref{t3}.
In Algorithm \ref{alg1}, we give an algorithm that demonstrates how to compute 
linearly independent rows and columns.
In the final section of this paper,
we apply our results to a problem in lacunary polynomial approximation.

%------------------------------------
\section{Rank of Submatrices of the Pascal Matrix}

In \cite{K16}, the following result was derived for square
submatrices of the Pascal triangle.

\begin{theorem}[{\cite[Theorem 1.1]{K16}}]
\label{t1}
Let $r = [r_0, \ldots, r_m]$ and $c = [c_0, \ldots, c_m]$ 
be indices to the rows and columns of the
square submatrix $T_{r,c}$ of the Pascal upper triangular matrix.
Then, $T_{r,c}$ is invertible iff 
the following equivalent conditions hold:
\begin{itemize}
\item $r \leq c$ (i.e., $r_i\leq c_i$ for all $i$).
\item There is no zero diagonal entry.
\end{itemize}
\end{theorem}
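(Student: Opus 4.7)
My plan is to prove the stated invertibility characterization by first establishing the equivalence of the two listed conditions, then handling necessity, and finally tackling the harder sufficiency direction. Arranging the selections $r$ and $c$ in increasing order (which can be done up to row/column permutation and does not affect invertibility), the $(i,i)$ diagonal entry of $T_{r,c}$ is $\binom{c_i}{r_i}$, and this vanishes precisely when $r_i > c_i$. Hence "no zero diagonal entry" is literally the entrywise inequality $r \le c$, and the two conditions in the theorem are equivalent.

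For the necessity direction I would argue the contrapositive. If $r_k > c_k$ for some index $k$, then by monotonicity $r_i \ge r_k > c_k \ge c_j$ for every $i \ge k$ and $j \le k$, so $\binom{c_j}{r_i} = 0$ throughout the $(m-k+1)\times(k+1)$ block in rows $k,\dots,m$ and columns $0,\dots,k$. The first $k+1$ columns of $T_{r,c}$ therefore lie inside the $k$-dimensional coordinate subspace of vectors supported in the first $k$ positions, so these $k+1$ vectors in a $k$-dimensional space are linearly dependent, and $T_{r,c}$ is singular.

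For sufficiency I would exploit the Birkhoff interpolation connection alluded to in the introduction. The identity $\binom{c_j}{r_i} = \tfrac{1}{r_i!}\bigl(\tfrac{d}{dx}\bigr)^{r_i} x^{c_j}\big|_{x=1}$ shows that, after row scaling by $1/r_i!$, $T_{r,c}$ is the matrix of the linear map $p \mapsto \bigl(p^{(r_i)}(1)\bigr)_{i=0}^m$ on the polynomial space $\mathcal{P} = \operatorname{span}\{x^{c_0},\dots,x^{c_m}\}$. Invertibility is then equivalent to the statement that the only $p \in \mathcal{P}$ with $p^{(r_i)}(1)=0$ for every $i$ is $p \equiv 0$. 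Embedding $\mathcal{P}$ as the polynomials of degree $\le c_m$ satisfying $p^{(k)}(0)=0$ for every missing order $k \in \{0,\dots,c_m\} \setminus \{c_0,\dots,c_m\}$ recasts the problem as a two-point Birkhoff interpolation scheme at $0$ and $1$. Under $r_i \le c_i$, the Polya condition on this scheme reduces to the elementary inequality $|\{i : r_i \le K\}| \ge |\{j : c_j \le K\}|$ for every $K$, which is immediate, and I would then invoke a regularity theorem appropriate to this class of two-point Birkhoff schemes to conclude non-singularity of $T_{r,c}$.

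The main obstacle will be converting the (easily verified) Polya condition into actual regularity of the Birkhoff scheme, since Polya is necessary but not generally sufficient. I would address this by either (i) appealing to a classical two-point Birkhoff regularity theorem tailored to this shape, such as the results of Polya, Ferguson, or Atkinson--Sharma cited in the introduction; or (ii) bypassing the Birkhoff language with a direct induction on $m$, using Pascal's identity $\binom{c}{r}=\binom{c-1}{r-1}+\binom{c-1}{r}$ to drive column operations that reduce $T_{r,c}$ to a strictly smaller submatrix still satisfying $r\le c$; or (iii) a combinatorial argument using total positivity of $T$ together with a Lindström--Gessel--Viennot lattice-path interpretation, which would have the bonus of yielding the stronger statement $\det T_{r,c} > 0$.
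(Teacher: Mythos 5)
Your outline is essentially the route the paper relies on: this paper does not reprove Theorem \ref{t1} at all but imports it from \cite{K16}, whose proof proceeds exactly by your main reduction, namely identifying $T_{r,c}$ (after harmless row scaling by $1/r_i!$) with the matrix of the derivative conditions $p\mapsto p^{(r_i)}(1)$ on $\operatorname{span}\{x^{c_j}\}$ and viewing the missing monomial orders as Taylor conditions at $0$, i.e.\ a two-point Birkhoff problem at the nodes $0$ and $1$. Your preliminary steps are fine: the diagonal entries are $\binom{c_i}{r_i}$, so the two bulleted conditions are literally the same statement, and your zero-block argument for necessity is correct. The only soft spot is your hedge at the decisive step: you verify that $r\le c$ is exactly the P\'olya condition for this two-point scheme (correct, via $\#\{i:r_i\le K\}\ge \#\{j:c_j\le K\}$ for all $K$) but then worry that P\'olya is not sufficient in general. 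For \emph{two} nodes it is sufficient; that is precisely the classical two-point regularity theorem traceable to \cite{P31} (see also \cite{W35,F69}), and the known counterexamples to P\'olya sufficiency require at least three nodes. So your primary argument does close, provided you cite that theorem explicitly rather than gesturing at it; Atkinson--Sharma is not the right reference here. Your alternative (iii), the Lindstr\"om--Gessel--Viennot/total-positivity argument, is also a complete and self-contained proof and yields the stronger conclusion $\det T_{r,c}>0$ when $r\le c$, which neither the paper nor \cite{K16} states; alternative (ii) is plausible but would need the column-operation induction actually carried out.
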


%Our goal is to extend this result.
%Specifically, we derive an expression for the rank of submatrices,
%both square and rectangular) of the Pascal matrix.
%We then provide a basis for the row and column space.

Our goal is to generalize this theorem by finding the rank of 
arbitrary submatrices, square or non-square.
These submatrices are defined by sequences of rows
$r = [r_0, \ldots, r_m]$ and columns $c = [c_0, \ldots, c_n]$ of $T$.
In general, $m\neq n$.
To prove our results, we focus on the condition $r_i \leq c_i$,
and to this end make the following definition.

\begin{definition}
\label{d1}
We say $\{\rh,\ch\}$ is an \emph{ordered sub-pair} 
for $\{r,c\}$ of \emph{length} $p+1$ if $\rh = [\rh_0, \ldots, \rh_p]$
is a subsequence of $r= [r_0, \ldots, r_m]$,
$\ch = [\ch_0, \ldots, \ch_p]$ is a subsequence of $c=[c_0, \ldots c_n]$,
and $\rh \leq \ch$ (i.e., $\rh_i \leq \ch_i$ for $i=0, \ldots, p$).
If $\rh=\ch=\emptyset$, then $p+1=0$.
We say $\{\rh,\ch\}$ is \emph{maximal}
if there is no ordered sup-pair of length greater than $p+1$.
\end{definition}

For example, consider $r=[2,7,11,14,17,20]$ and $c=[0,4,9,10,15]$.
Then,
$\{\rh,\ch\} = \{[2,11],[4,15]\}$ is an ordered sub-pair of length $2$.
But this sub-pair is not maximal since the
ordered sub-pair $\{[2,7,14],[4,9,15]\}$ is of length $3$,
which, as it turns out, is maximal.
Note that $\{[2,7,11],[9,10,15]\}$ is also a maximal sub-pair,
hence these need not be unique.

The sub-pairs provided in Definition \ref{d1} are used to construct
invertible and full-rank submatrices of $T_{r,c}$,
as we summarize in Theorem \ref{t2}.

\begin{theorem}
\label{t2}
Let $r = [r_0, \ldots, r_m]$ and $c = [c_0, \ldots, c_n]$ 
be indices to the rows and columns of the Pascal upper triangular matrix.
Suppose $\{\rh,\ch\}$ is an ordered sup-pair of $\{r,c\}$,
with $\rh = [\rh_0, \ldots, \rh_p]$ and $\ch = [\ch_0, \ldots, \ch_p]$.
Then,
\begin{itemize}
\item $T_{\rh,\ch}$ is a $(p+1)\times(p+1)$
invertible submatrix of $T_{r,c}$.
\item $T_{\rh,c}$ is a $(p+1)\times(n+1)$ submatrix of full row rank.
\item $T_{r,\ch}$ is a $(m+1)\times(p+1)$ submatrix of full column rank.
\end{itemize}
If $\{\rh,\ch\}$ is maximal,
\begin{itemize}
\item The rank of $T_{r,c}$ is $p+1$.
\item The columns of $T_{r,\ch}$ span the column space of $T_{r,c}$.
\item The rows of $T_{\rh,c}$ span the row space of $T_{r,c}$.
\end{itemize}
\end{theorem}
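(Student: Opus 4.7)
The plan is to use Theorem \ref{t1} as the main engine: an ordered sub-pair is precisely a pair $\{\rh,\ch\}$ with $\rh \leq \ch$, which is exactly the hypothesis for invertibility of the square submatrix $T_{\rh,\ch}$.

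For the first three bullet points I would argue as follows. Since $\rh \leq \ch$, Theorem \ref{t1} gives directly that the $(p+1)\times(p+1)$ matrix $T_{\rh,\ch}$ is invertible. Because $\ch$ is a subsequence of $c$, $T_{\rh,\ch}$ sits inside $T_{\rh,c}$ as a column selection, so its $p+1$ linearly independent rows are exactly the rows of $T_{\rh,c}$; hence $T_{\rh,c}$ has row rank $p+1$, which is full since it has only $p+1$ rows. The claim for $T_{r,\ch}$ is entirely symmetric, viewing $T_{\rh,\ch}$ as a row selection of $T_{r,\ch}$.

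For the maximal case, the lower bound $\rank(T_{r,c}) \geq p+1$ is immediate from the invertibility of the sub-block $T_{\rh,\ch}$. For the upper bound I would argue by contradiction: if $\rank(T_{r,c}) \geq p+2$, then by the standard linear-algebra fact that the rank of a matrix equals the order of its largest invertible square submatrix, there exist subsequences $\widetilde r$ of $r$ and $\widetilde c$ of $c$ of length $p+2$ for which $T_{\widetilde r,\widetilde c}$ is invertible. Applying the other direction of Theorem \ref{t1}, invertibility forces $\widetilde r \leq \widetilde c$, so $\{\widetilde r,\widetilde c\}$ is an ordered sub-pair of length $p+2$, contradicting the maximality of $\{\rh,\ch\}$. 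The two spanning statements then drop out of the rank equality: $T_{r,\ch}$ consists of $p+1$ linearly independent columns of $T_{r,c}$ (by the full column rank already established), while the column space of $T_{r,c}$ now has dimension $p+1$, so those columns must form a basis of it. The row-space argument is symmetric.

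The main obstacle I anticipate is the step extracting a $(p+2)\times(p+2)$ invertible submatrix from the rank assumption. This is a classical result, but deserves either a citation or a brief self-contained justification (pick $p+2$ independent columns to obtain an $(m+1)\times(p+2)$ block of rank $p+2$, then pick $p+2$ independent rows of that block). Everything else reduces to bookkeeping about subsequences and two applications of Theorem \ref{t1}, one in each direction of the equivalence.
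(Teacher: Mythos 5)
Your proposal is correct and follows essentially the same route as the paper: invertibility of $T_{\rh,\ch}$ via Theorem \ref{t1}, full row/column rank of $T_{\rh,c}$ and $T_{r,\ch}$ because they contain that invertible block, and the contradiction argument extracting a $(p+2)\times(p+2)$ invertible submatrix whose index pair would violate maximality. Your added remark that the classical ``rank equals the order of the largest invertible square submatrix'' fact deserves an explicit justification is a fair observation; the paper uses it implicitly without comment.
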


\begin{proof}
The dimensions of these submatrices follow from
$\#r=m+1$, $\#c=n+1$ and $\#\rh=\#\ch=p+1$.
Note that $p \leq \min\{m,n\}$.
Since $\{\rh,\ch\}$ is ordered, $\rh \leq \ch$.
By Theorem \ref{t1}, $T_{\rh,\ch}$ is invertible.
The rank of $T_{\rh,c}$ is at least $p+1$ because it 
contains the invertible matrix $T_{\rh,\ch}$ as a submatrix.
Since it is of dimension $(p+1)\times(n+1)$, 
it necessarily has a full row rank.
Likewise, $T_{r,\ch}$ has full column rank.

For the second part, suppose $\{\rh,\ch\}$ is maximal.
Since $T_{\rh,\ch}$ is a submatrix of $T_{r,c}$,
$$
\rank\big(T_{r,c}\big) \geq \rank\big(T_{\rh,\ch}\big) = p+1.
$$
Suppose $\rank\big(T_{r,c}\big) > p+1$.
Then, there exists rows
$\rhh = [\rhh_0, \ldots, \rhh_{p+1}]$
and columns
$\chh = [\chh_0, \ldots, \chh_{p+1}]$
of $T_{r,c}$ such that
$\rank\big(T_{\rhh,\chh}\big) = p+2$.
By Theorem \ref{t1}, $\rhh\leq\chh$,
and so $\{\rhh,\chh\}$ is an ordered sup-pair of length $p+2$.
But this contradicts the assumption that $\{\rh,\ch\}$ is maximal of order $p+1$.
Therefore, 
$\rank\big(T_{r,c}\big) \not> p+1$,
and so
$\rank\big(T_{r,c}\big) = p+1$.

Finally, if $\{\rh,\ch\}$ is maximal,
$\rank(T_{\rh,c}) = \rank(T_{r,c}) = p+1$,
and so $T_{\rh,c}$ spans the row space of $T_{r,c}$.
Likewise, $T_{r,\ch}$ spans the column space of $T_{r,c}$.
\end{proof}

\section{Computing the Rank of Submatrices of the Pascal Matrix}

In this section we show how to compute a maximal ordered sub-pair 
$\{\rh,\ch\}$ of $\{r,c\}$.
In Algorithm \ref{alg1}, we actually compute the indices $\alpha$ and $\beta$
of a maximal pair $\{\rh,\ch\}$,
such that $\rh = r_\alpha = [r_{\alpha_0}, \ldots, r_{\alpha_p}]$
and $\ch = c_\alpha = [c_{\alpha_0}, \ldots, c_{\alpha_p}]$.

\begin{algorithm}
\label{alg1}
Compute indices $\{\alpha,\beta\}$ to a maximal ordered sub-pair of $\{r,c\}$.
\begin{itemize}
\item Input $r = [r_0, \ldots, r_m]$ and $c=[c_0, \ldots, c_n]$.
\item If $r_0>c_n$, set $\alpha=\beta=\emptyset$, go to output.
\item Otherwise, set $M := \max\{i : r_i \leq c_n\}$.
\item Set $\beta_0 := \min \{k : r_0 \leq c_k\}.$
\item For $i=1, \ldots, M$, set 
  $\beta_i := \max\Big\{\min \{k : r_i \leq c_k\}, \ \beta_{i-1}+1\big) \Big\}.$
\item Set $p := \max\{i : \beta_i \leq n\}$.
\item Set $\alpha = \{0, \ldots, p\}$.
\item Set $\beta = \{\beta_0, \ldots, \beta_p\}$.
\item Output $\alpha$, $\beta$.
\end{itemize}
\end{algorithm}

The strategy of our algorithm is to pair each
$r_{\alpha_i}$ with the first $c_{\beta_i}$ such that
$r_{\alpha_i} \leq c_{\beta_i} > c_{\beta_{i-1}}$.
In this \emph{first occurrence} strategy, $\alpha = [0, \ldots, p]$.
Therefore, $\rh = r_\alpha = [r_0, \ldots, r_p]$.
%For example, if $r=[2,7,11,14,17,20]$ and $c=[0,4,9,10,15]$,
%then $2 \leq 4$, $7 \leq 9$ and $11 \leq 15$, 
%and so $\rh=[2,7,11]$ and $\ch=[4,9,15]$.
%We don't consider $14 \leq 15$ because $15$ was previously used
%in the relation $11 \leq 15$.
%In this ``first  occurrence'' strategy,
%$\rh$ always contains the first $p+1$ entries of $r$.

\begin{theorem}
\label{t3}
Let $r = [r_0, \ldots, r_m]$ and $c = [c_0, \ldots, c_n]$ 
be indices to the rows and columns of the
rectangular submatrix $T_{r,c}$ of the Pascal upper triangular matrix.
Let $\{\alpha,\beta\}$ be determined by Algorithm \ref{alg1}.
If $\alpha=\beta=\emptyset$,
then $T_{r,c}$ is a matrix of zeros of rank $0$.
Otherwise, let $\rh = r_\alpha$ and $\ch=c_\beta$.
Then, $\{\rh,\ch\}$ is a maximal ordered sub-pair of $\{r,c\}$,
and the rank of the submatrices is determined by Theorem \ref{t2}.
\end{theorem}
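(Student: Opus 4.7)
The plan is to verify that Algorithm~\ref{alg1} produces a valid ordered sub-pair of $\{r,c\}$ and that this sub-pair is maximal; once both facts are in hand, Theorem~\ref{t2} applied to $\{\rh,\ch\}$ supplies the rank formula and the spanning statements for the row and column spaces automatically. The empty case $r_0>c_n$ is immediate: since $r$ and $c$ are strictly increasing, every row index satisfies $r_i\geq r_0>c_n\geq c_j$, so every entry $\binom{c_j}{r_i}$ vanishes by the Pascal-matrix convention and $T_{r,c}$ is the zero matrix.

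For the non-empty case, I would first check that $\{\rh,\ch\}$ is a valid ordered sub-pair. The inner $\max$ in the recursion forces $\beta_i\geq\beta_{i-1}+1$, so $\beta$ is strictly increasing and $\ch=c_\beta$ is a genuine subsequence of $c$; the index list $\alpha=[0,1,\ldots,p]$ is trivially strictly increasing. For the pointwise inequality $\rh_i\leq\ch_i$, I would split into two cases: if the $\min$ term wins the $\max$, then $\beta_i=\min\{k:r_i\leq c_k\}$ and the inequality is immediate; if $\beta_{i-1}+1$ wins, then still $\beta_i\geq\min\{k:r_i\leq c_k\}$, and monotonicity of $c$ gives $r_i\leq c_{\beta_i}$.

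For maximality, I would argue by contradiction. Suppose there is an ordered sub-pair of length $q+1>p+1$, with row indices $a_0<\cdots<a_q$ and column indices $b_0<\cdots<b_q$ satisfying $r_{a_i}\leq c_{b_i}$. Because $a_i\geq i$ and $r$ is increasing, $r_i\leq r_{a_i}\leq c_{b_i}$, so replacing the row-index list by $[0,1,\ldots,q]$ still yields an ordered sub-pair. Then I would prove by induction on $i$ that the algorithm's choice satisfies $\beta_i\leq b_i$: the base case $\beta_0=\min\{k:r_0\leq c_k\}\leq b_0$ is immediate, and in the step both $b_i\geq b_{i-1}+1\geq\beta_{i-1}+1$ (by the inductive hypothesis) and $b_i\geq\min\{k:r_i\leq c_k\}$ hold, so $b_i\geq\beta_i$. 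In particular $\beta_q\leq b_q\leq n$; combined with $q\leq M$ (which holds because $r_q\leq c_{b_q}\leq c_n$), this forces the algorithm to define $\beta$ at least up to index $q$ and keep it $\leq n$ there, so $p\geq q$, contradicting $q>p$.

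The main obstacle I anticipate is packaging the greedy comparison cleanly: the \emph{a priori} arbitrary row-index list $[a_0,\ldots,a_q]$ must first be replaced by the initial segment $[0,\ldots,q]$, then the inductive bound $\beta_i\leq b_i$ must be carried through the two-sided $\max$, and finally the side condition $q\leq M$ must be verified so that the algorithm genuinely reaches index $q$. Once these pieces fit together, maximality follows, and Theorem~\ref{t2} immediately yields $\rank(T_{r,c})=p+1$ together with the spanning statements for $T_{\rh,c}$ and $T_{r,\ch}$.
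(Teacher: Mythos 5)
Your argument is correct and follows the same route as the paper: the zero-matrix case when $r_0>c_n$, the check that the greedy $\beta$ is strictly increasing with $r_i\leq c_{\beta_i}$, and maximality via the first-occurrence choice. Where the paper merely asserts that ``the $\beta_i$ are incremented by the least amount possible, and so $p$ is the largest index,'' your replacement of an arbitrary competing row-index list by the initial segment $[0,\ldots,q]$, the inductive bound $\beta_i\leq b_i$, and the verification $q\leq M$ make rigorous exactly the step the paper leaves informal.
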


\begin{proof}
Suppose $\alpha=\beta=\emptyset$.
By Algorithm \ref{alg1}, this occurs when $r_0 > c_n$.
In this case, $r_i > c_j$ for $0\leq i \leq m$ and $0\leq j \leq n$,
and so the elements $\binom{c_j}{r_i}$ of $T_{r,c}$ are all zero.
Hence, $T_{r,c}$ is the zero matrix, of rank $0$.

Suppose that $\alpha$ and $\beta$ are non-empty.
This occurs when $r_0 \leq c_n$.
Since $\alpha = [0, \ldots, p]$ and $\beta_i > \beta_{i-1}$,
both $\alpha$ and $\beta$ are strictly increasing sequences.
Since $r$ and $c$ are also strictly increasing, 
it follows that $\rh=r_\alpha$ and $\ch=c_\beta$ are both strictly increasing.
In the algorithm we choose $\beta_i$ to be the first occurrence such
that $\alpha_i \leq \beta_i$ and $\beta_i > \beta_{i-1}$.
Therefore, the $\beta_i$ are incremented by the least amount possible,
and so $p$ is the largest index for an ordered sub-pair of $\{r,c\}$.
Hence, $\{\rh,\ch\}$ is maximal.
\end{proof}

In the next result, we construct index sets of the same rank as
$T_{\rh,\ch}$ and $T_{r,c}$ with a minimal number of nonzero entries.

\begin{cor}
\label{c1}
Let $\{\rh,\ch\}$ be an ordered sub-pair of $\{r,c\}$.
Let $\alpha$ and $\beta$ be the corresponding index sets
such that $\rh=r_\alpha$ and $\ch = c_\beta$.
Let $I_{\rh,\ch}$ be the $(m+1)\times(n+1)$ matrix with 
$I_{\rh,\ch}(\alpha_i,\beta_i) = 1$, and all other entries zero.
Then, $\rank(T_{\rh,\ch}) = \rank(I_{\rh,\ch})$.
If $\{\rh,\ch\}$ is maximal, then $\rank(T_{r,c}) = \rank(I_{\rh,\ch})$.
\end{cor}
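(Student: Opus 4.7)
The plan is to show both equalities reduce to the common value $p+1$, where $p+1$ is the length of the ordered sub-pair $\{\rh,\ch\}$, so the corollary falls out of Theorems \ref{t1} and \ref{t2} together with an easy observation about $I_{\rh,\ch}$.

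First I would pin down $\rank(I_{\rh,\ch})$. By construction this matrix has exactly $p+1$ nonzero entries, each equal to $1$, at positions $(\alpha_0,\beta_0),\ldots,(\alpha_p,\beta_p)$. Because $\rh$ is a subsequence of the strictly increasing sequence $r$, the index sequence $\alpha$ must be strictly increasing; likewise for $\beta$. Hence the $p+1$ ones occupy distinct rows and distinct columns, so $I_{\rh,\ch}$ is obtained from a $(p+1)\times(p+1)$ identity (embedded in the larger matrix) by a pair of row/column permutations, giving $\rank(I_{\rh,\ch}) = p+1$.

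Next I would handle $T_{\rh,\ch}$. By the definition of ordered sub-pair (Definition \ref{d1}) we have $\rh\leq\ch$, so Theorem \ref{t1} applies to the square $(p+1)\times(p+1)$ submatrix $T_{\rh,\ch}$ and declares it invertible. Therefore $\rank(T_{\rh,\ch}) = p+1 = \rank(I_{\rh,\ch})$, which is the first assertion. Finally, if $\{\rh,\ch\}$ is maximal, Theorem \ref{t2} directly tells us $\rank(T_{r,c}) = p+1$, which again equals $\rank(I_{\rh,\ch})$, closing out the second assertion.

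There is not really a hard step here; the only thing worth double-checking is the strict monotonicity of $\alpha$ and $\beta$, since that is what guarantees the ones in $I_{\rh,\ch}$ land in distinct rows and columns. Once that is noted, the corollary is a one-line consequence of the two preceding theorems.
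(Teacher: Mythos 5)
Your proposal is correct and follows essentially the same route as the paper: both argue that the strictly increasing index sequences $\alpha$ and $\beta$ place the $p+1$ ones of $I_{\rh,\ch}$ in distinct rows and columns, giving rank $p+1$, and then match this with $\rank(T_{\rh,\ch})=p+1$ (invertibility via the ordered condition $\rh\leq\ch$) and, in the maximal case, with $\rank(T_{r,c})=p+1$ from Theorem \ref{t2}. The only cosmetic difference is that you invoke Theorem \ref{t1} directly for the invertibility of $T_{\rh,\ch}$, while the paper routes this through Theorem \ref{t2}.
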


\begin{proof}
By Theorem \ref{t2}, the rank of $T_{\rh,\ch}$ is $p+1$.
This is the number of elements in $\rh$ and $\ch$,
which equals the number of elements in $\alpha$ and $\beta$.
The matrix $I_{\rh,\ch}$ has zeros for all entries except
for entry $(\alpha_i,\beta_i)$ for $i=0, \ldots, p$.
But, since $\alpha$ and $\beta$ are strictly increasing,
these non-zero entries are in different rows and columns.
Hence, the dimension of the row and column spaces of $I_{\rh,\ch}$ are
necessarily equal to the number of non-zero elements,
which is the length of $\alpha$ or $\beta$.
Therefore, the rank of $I_{\rh,\ch}$ is $p+1$,
which equals the rank of $T_{\rh,\ch}$.

In the case that $\{\rh,\ch\}$ is maximal,
we have by Theorem \ref{t2} that the rank of $T_{r,c}$
equals the rank of $T_{\rh,\ch}$, which equals the rank of $I_{\rh,\ch}$.
\end{proof}

We conclude this section with a constructive example.
\begin{example}
\label{ex1}
Let $r = [r_0, \ldots, r_5] = [2,7,11,14,17,20]$ and 
$c=[c_0, \ldots, c_4] = [0,4,9,10,15]$.
These are the rows and columns for the following submatrix of the Pascal matrix
$$
T_{r,c} = T_{[2,7,11,14,17,20],[0,4,9,10,15]} = 
\begin{bmatrix} 0&6&36&45&105\\ 0&0&36&120&6435\\ 
         0&0&0&0&1365\\ 0&0&0&0&15\\ 0&0&0&0&0\\ 0&0&0&0&0 \end{bmatrix}.
$$
Then, $m=5$ and $n=4$, and
$$
M = \max_i \{r_i \leq c_4\} = \max_i \{r_i \leq 15\} =  3
$$
since $14\leq 15$ but $17\not\leq 15$.
Then,
\begin{align*}
\beta_0 &= \{\min_k \big(r_0 \leq c_k)\}
     = \{\min_k \big(2 \leq c_k)\}
     = 1 \ \text{since} \ c_0<2\leq c_1, \\
\beta_1 &= \max\Big\{\min_k \big(r_1 \leq c_k), \ \beta_{0}+1 \Big\}
     = \max\Big\{\min_k \big(7 \leq c_k), \ 1+1 \Big\}
     = \max\Big\{2, \ 2 \Big\} = 2, \\
\beta_2 &= \max\Big\{\min_k \big(r_2 \leq c_k), \ \beta_{1}+1 \Big\}
     = \max\Big\{\min_k \big(11 \leq c_k), \ 2+1 \Big\}
     = \max\Big\{4, \ 3 \Big\} = 4, \\
\beta_3 &= \max\Big\{\min_k \big(r_3 \leq c_k), \ \beta_{2}+1 \Big\}
     = \max\Big\{\min_k \big(14 \leq c_k), \ 4+1 \Big\}
     = \max\Big\{4, \ 5 \Big\} = 5,
\end{align*}
and so
$$\beta = [\beta_i  : \beta_i \leq n] = [\beta_i  : \beta_i \leq 4] = 
  [\beta_0, \beta_1, \beta_2] = [1,2,4].$$
Hence, $p=2$.
Then,
$\rh = r_{[0,1,2]} = [2,7,11]$ and $\ch = c_{[1,2,4]} = [4,9,15]$.
Thus, we compute
$$
T_{\rh,\ch} = 
T_{[2,7,11],[4,9,15]} = 
\begin{bmatrix} 6&36&105\\ 0&36&6435\\ 0&0&1365 \end{bmatrix}.
$$
By Theorem \ref{t2}, 
$$\rank(T_{r,c}) = \rank(T_{\rh,\ch}) = p+1 = 3$$
and $T_{\rh,\ch}$ is invertible.
Also by this theorem, the matrix
$$T_{\rh,c} = T_{[r_0,r_1,r_2],c} = T_{[2,7,11],[0,4,9,10,15]}$$
has linearly independent rows (full row rank),
and the matrix
$$T_{r,\ch} = T_{r,[c_1,c_2,c_4]} = T_{[2,7,11,14,17,20],[4,9,15]}$$
has linear independent columns (full column rank).
The index matrix $I_{\rh,\ch}$ defined in 
Corollary \ref{c1} is nonzero only at entries $(\alpha_i,\beta_i)$.
These are $(0,1)$, $(1,2)$ and $(2,4)$.
Hence,
$$
I_{\rh,\ch} = \begin{bmatrix} 0&1&0&0&0\\ 0&0&1&0&0\\ 
         0&0&0&0&1\\ 0&0&0&0&0\\ 0&0&0&0&0\\ 0&0&0&0&0 \end{bmatrix},
$$
which is rank $3$.
\end{example}

%--------------------------------
\section{Application to Polynomial Approximation}

In \cite{K16}, we established a connection between submatrices of
the Pascal upper triangular matrix and polynomial interpolation.
Let $r = [r_0, \ldots, r_m]$ and $c = [c_0, \ldots, c_n]$.
Let 
$$
\Lambda_{x,r} = 
\Big[\frac{\delta_xD^{r_0}}{r_0!}, \ldots, \frac{\delta_x D^{r_m}}{r_m!}\Big]
$$
with
$$
\delta_{x} D^{r_i} : f \mapsto f^{(r_i)}(x),
$$
and let
$$
V_c = \Big[(\cdot)^{c_0}, \ldots, (\cdot)^{c_n}\Big]
$$
be the power basis with powers $c_i$ for $i=0, \ldots, n$.
Then, the matrix
$$
\Lambda_{x,r}^TV_c = \Big [ \dfrac{\delta_xD^{r_i} (\cdot)^{c_j}}{c_j!}  \Big]
$$
is a kind of \emph{generalized Vandermonde} for the one-point lacunary
polynomial interpolation problem.
This kind of matrix has been considered in \cite{P09,P11}, but for square
matrices.
In \cite{K16} we showed that this generalized Vandermonde,
for the case that $x=1$, is related to the Pascal matrix as follows:

\begin{prop}
\label{p1}
Let $r = [r_0, \ldots, r_m]$ and $c = [c_0, \ldots, c_n]$ be selections
of the rows and columns of the Pascal upper triangular matrix $T$.
Then, $T_{r,c} = \Lambda_{1,r}^TV_c$.
\end{prop}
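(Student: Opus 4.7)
The plan is to verify the claim entry by entry. Fix $i \in \{0, \ldots, m\}$ and $j \in \{0, \ldots, n\}$. By the given definitions of $\Lambda_{1,r}$ as a row of functionals and $V_c$ as a row of monomials, the $(i,j)$ entry of $\Lambda_{1,r}^T V_c$ is the functional $\frac{1}{r_i!}\,\delta_1 D^{r_i}$ applied to the monomial $x \mapsto x^{c_j}$, i.e., $\frac{1}{r_i!}\, D^{r_i} x^{c_j}\big|_{x=1}$. The corresponding entry of $T_{r,c}$ is $\binom{c_j}{r_i}$, so the entire proposition reduces to the scalar identity
$$
\frac{1}{r_i!}\, D^{r_i} x^{c_j}\big|_{x=1} \;=\; \binom{c_j}{r_i}.
$$

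To establish this, I would split into two cases matching the convention $\binom{c_j}{r_i} = 0$ for $r_i > c_j$ built into the definition of $T$. If $r_i \leq c_j$, iterating the power rule gives $D^{r_i} x^{c_j} = \frac{c_j!}{(c_j - r_i)!}\, x^{c_j - r_i}$; evaluating at $x = 1$ and dividing by $r_i!$ yields exactly $\binom{c_j}{r_i}$. If $r_i > c_j$, then $x^{c_j}$ has degree strictly less than $r_i$, so its $r_i$-th derivative is identically zero, matching $\binom{c_j}{r_i} = 0$.

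Since the $(i,j)$ entries agree for every valid $i$ and $j$, the two matrices are equal. There is no substantive obstacle here: the content is simply the identification of $\binom{c_j}{r_i}$ with the normalized falling factorial $\frac{1}{r_i!}c_j(c_j-1)\cdots(c_j - r_i + 1)$ obtained by evaluating $D^{r_i} x^{c_j}$ at $x = 1$, together with the observation that both sides vanish in the degenerate range $r_i > c_j$. The only thing to be careful about is the factorial normalization (the $1/r_i!$, tied to the order of differentiation, rather than to $c_j$) and the two-case split.
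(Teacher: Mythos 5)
Your proof is correct. The paper does not actually prove Proposition \ref{p1} in this article---it is quoted from \cite{K16}---and your entry-by-entry verification is exactly the expected argument: the $(i,j)$ entry of $\Lambda_{1,r}^TV_c$ is $\frac{1}{r_i!}D^{r_i}x^{c_j}\big|_{x=1}$, which the power rule identifies with $\binom{c_j}{r_i}$ when $r_i\leq c_j$ and with $0$ when $r_i>c_j$, matching the convention $\binom{c_j}{r_i}=0$ built into $T$. One point worth flagging: the display preceding the proposition writes the entries of $\Lambda_{x,r}^TV_c$ with denominator $c_j!$, which is inconsistent with the definition of $\Lambda_{x,r}$ (normalized by $r_i!$); your computation uses the $r_i!$ normalization, which is the one that actually yields $\binom{c_j}{r_i}$ (dividing by $c_j!$ would give $1/(c_j-r_i)!$ instead), so your argument implicitly corrects what is evidently a typo in the text.
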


Hence, by Theorem \ref{t3}, we have the following:

\begin{cor}
\label{c2}
Let $r = [r_0, \ldots, r_m]$ and $c = [c_0, \ldots, c_n]$.
Assume $r_0 \leq c_n$.
Let $\{\rh,\ch\}$ be an ordered sub-pair of $\{r,c\}$.
%Then,
%$$
%\rank\Big(\Lambda_{1,r}^TV_c\Big) 
% = \rank\Big(\Lambda_{1,r}^TV_{\ch}\Big) = p+1.
%$$
Then, $\Lambda_{1,r}^TV_{\ch}$ has full column rank.
\end{cor}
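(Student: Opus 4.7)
The plan is to recognize that the matrix $\Lambda_{1,r}^T V_{\ch}$ is, via Proposition \ref{p1} applied with column selection $\ch$ in place of $c$, literally equal to the Pascal submatrix $T_{r,\ch}$. Since $\ch$ is a (strictly increasing) subsequence of $c$, it is itself a legitimate column selection of $T$, so Proposition \ref{p1} applies verbatim and gives $\Lambda_{1,r}^T V_{\ch} = T_{r,\ch}$. The corollary therefore reduces to a purely combinatorial-linear-algebraic claim about a submatrix of the Pascal triangle.

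Next I would invoke Theorem \ref{t2} directly. Given that $\{\rh,\ch\}$ is an ordered sub-pair of $\{r,c\}$ with $\rh = [\rh_0,\ldots,\rh_p]$ and $\ch = [\ch_0,\ldots,\ch_p]$, the middle bullet of the first list in Theorem \ref{t2} states that $T_{r,\ch}$ is an $(m+1)\times(p+1)$ matrix of full column rank. Combining this with the previous identification yields that $\Lambda_{1,r}^T V_{\ch}$ has full column rank, as required. The hypothesis $r_0 \leq c_n$ is not strictly needed for this implication, but it ensures (via Algorithm \ref{alg1} and Theorem \ref{t3}) that a non-empty ordered sub-pair $\{\rh,\ch\}$ actually exists, so that the statement is non-vacuous.

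Because the work is essentially a two-line deduction, I do not expect a genuine obstacle. The only thing to check carefully is that Proposition \ref{p1} can be applied with $\ch$ rather than $c$, which is immediate since the proposition was stated for arbitrary row and column selections of $T$. Thus the proof consists of the chain of equalities and inequalities $\ \rank\!\big(\Lambda_{1,r}^T V_{\ch}\big) = \rank\!\big(T_{r,\ch}\big) = p+1 = \#\ch$, concluding full column rank.
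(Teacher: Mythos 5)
Your proposal is correct and follows essentially the same route the paper intends: identify $\Lambda_{1,r}^T V_{\ch} = T_{r,\ch}$ via Proposition \ref{p1} (with $\ch$ as the column selection) and then invoke the full-column-rank statement of Theorem \ref{t2}, with $r_0 \leq c_n$ only guaranteeing a non-empty sub-pair exists. One trivial slip: the relevant item is the third bullet of Theorem \ref{t2} (concerning $T_{r,\ch}$), not the middle one, but the statement you actually quote and use is the right one.
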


Now we will apply our result to polynomial approximation.
Let $\{\rh,\ch\}$ be an ordered sub-pair of $\{r,c\}$ of length $p+1$.
We may assume it is maximal, but it doesn't need to be.
Let
$$
f(x) = \sum_{j=0}^p b_j x^{\ch_j}.
$$
Assume we are given some data, $y = [y_0, \ldots, y_m]$.
Since typically $m>p$, we cannot expect to interpolate.
However, by Corollary \ref{c2},
$\Lambda_{1,r}^TV_{\ch}$ has full column rank,
and so we can easily solve the least squares problem
$$
\minimize \Big\{\sum_{i=0}^m \norm{D^{r_i} f(1) - y_i}_2^2, \ b\in\RR^{p+1} \Big\}.
$$
The unique solution to this problem is
$$
b = A^+y = (A^TA)^{-1}A^Ty,
$$
where $A^+$ is the pseudo-inverse of $A:=\Lambda_{1,r}^TV_{\ch}$.

We conclude with an example.

\begin{example}
\label{ex2}
Let $r = [r_0, \ldots, r_5] = [2,7,11,14,17,20]$ and 
$c=[c_0, \ldots, c_4] = [0,4,9,10,15]$.
In Example \ref{ex1}, it was shown that
$\{\rh,\ch\} = \{[2,7,11],[4,9,15]\}$ is
a maximal ordered sub-pair of $\{r,c\}$.
Therefore, $\ch=[4,9,15]$ is the degree sequence of our lacunary polynomial, and
$$
A = \Lambda_{1,r}^TV_{\ch}
=
\begin{bmatrix}
6&36&105 \\
0&36&6435 \\
0&0&1365 \\
0&0&15 \\
0&0&0 \\
0&0&0
\end{bmatrix}.
$$
Suppose $y = [1,1,1,1,1,1]^T$.
Then, the coefficient sequence of the least squares solution is
$$
b = \begin{bmatrix}.7813 &  -.1046 & .0007\end{bmatrix}^T.
$$
Hence, the least squares lacunary polynomial is
$$
f(x) = .7813 x^4 - .1046 x^9 + .0007 x^{15}.
$$
\end{example}

%%%%%%%%%%%%%%%%%%%%%%%%%%%%%%%%%%%%%%%%%%%%%%%%%%%%%%%%%%%%%


\begin{thebibliography}{1}
\bibitem{B1906}
G. D. Birkhoff.
\newblock General mean value and remainder theorems with applications to mechanical 
differentiation and integration.
\newblock {\em Trans. Am. Math. Soc.} 1, 107--136, 1906.

\bibitem{F69}
D. Ferguson.
\newblock The Question of Uniqueness for G. D. Birkhoff Interpolation Problems.
\newblock {\em J. Approx. Th.} 2, 1--28, 1969.

\bibitem{K16}
S. Kersey.
\newblock Invertibility of Submatrices of the Pascal Matrix and Birkhoff Interpolation.
\newblock {\em J. of Mathematical Sciences: Advances and Applications} 41(1), 45--56, 2016.

\bibitem{P09}
F. Palacios-Qui$\tilde{\text{n}}$onero, P. Rubi\'o-D\'iaz,
J. D\'iaz-Barrero, J. Rossell.
\newblock
Order regularity of two-node Birkhoff Interpolation with Lacunary Polynomials.
\newblock
{\em Applied Mathematics Letters} 22, 386--389 (2009).

\bibitem{P11}
F. Palacios-Qui$\tilde{\text{n}}$onero, P. Rubi\'o-D\'iaz,
J. D\'iaz-Barrero, J. Rossell.
\newblock
Order of regularity for Birkhoff Interpolation with Lacunary Polynomials.
\newblock
{\em Mathematica Aeterna} 1 (3), 129--135 (2011).

\bibitem{P31}
G. \polya.
\newblock
Bemerkungen zur Interpolation und zur N\"aherungstheorie der Balkenbiegung.
\newblock {\em Z. Angew. Math. Mech.} 11, 4445--449, 1931.

\bibitem{W35}
J. M. Whittaker.
\newblock Interpolatory Function Theory.
\newblock {\em Cambridge University Press (London)}, 1935.

\end{thebibliography}
\end{document}